\documentclass{amsart}
\usepackage{amsmath}
\usepackage{graphicx}
\usepackage{color}

\usepackage{amssymb}

\usepackage{setspace}
\onehalfspacing

\renewcommand{\emph}{\bf}

\renewcommand{\int}{\operatorname{int}}


\newtheorem{thm}{Theorem}
\newtheorem{corollary}[thm]{Corollary}
\newtheorem{lemma}[thm]{Lemma}

\theoremstyle{definition}
\newtheorem{defn}[thm]{Definition}

\newtheorem{example}[thm]{Example}

\begin{document}

\title{Separable linear orders and universality}
\date{July 20, 2012}
\author{Stefan Geschke}
\address{Hausdorff Center for Mathematics\\Endenicher Allee 62\\53115   Bonn, Germany}
\email{geschke@hcm.uni-bonn.de}
\begin{abstract}
This is mostly a note to myself to clear up some confusion.
\end{abstract}

\maketitle

In various places in the literature, including \cite{GK}, it is stated that every separable linear order embeds into the real line.
This is, however, not the case, at least not with respect to the usual definition
of separability.

\begin{defn}
Let $(L,\leq)$ be a linear order.
$D\subseteq L$ is {\em dense} in $L$ if 
for all $a,b\in L$ with $a<b$ and $(a,b)\not=\emptyset$ there is $d\in D$ with $a<d<b$.
$L$ is {\em separable} if it has a countable dense subset.

Two points $x,y\in L$ form a {\em jump} if $x<y$ and the open interval
$(x,y)$ is empty.
\end{defn}

\begin{lemma}\label{jumps}
Let $L$ be a suborder of $\mathbb R$.
Then $L$ is separable and has only countably many jumps.
\end{lemma}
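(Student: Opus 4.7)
The plan is to prove both conclusions by exploiting a countable base of $\mathbb R$, namely the open intervals with rational endpoints.

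For separability, I would first fix for each pair of rationals $p<q$ such that $L\cap(p,q)\neq\emptyset$ a single witness $d_{p,q}\in L\cap(p,q)$, and let $D$ be the resulting countable subset of $L$. To verify that $D$ is dense in $L$ in the sense of the definition, I take $a,b\in L$ with $a<b$ and $(a,b)_L\neq\emptyset$, pick any $c\in L$ with $a<c<b$, and then sandwich $c$ between rationals $p,q$ with $a<p<c<q<b$. Since $c\in L\cap(p,q)$, this intersection is nonempty, so $d_{p,q}\in D$ has been chosen, and $a<p<d_{p,q}<q<b$ as required.

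For the jump count, I would show that distinct jumps of $L$ give pairwise disjoint open intervals of $\mathbb R$, whence injecting jumps into $\mathbb Q$ by choosing one rational from each interval bounds their number by $\aleph_0$. The disjointness step is the heart of this part: suppose $(x_1,y_1)$ and $(x_2,y_2)$ are two jumps of $L$ whose $\mathbb R$-intervals meet. By swapping if necessary I may assume $x_1\le x_2<y_1$. If $x_1<x_2$, then $x_2\in L$ lies strictly between $x_1$ and $y_1$, contradicting that $(x_1,y_1)$ is a jump; hence $x_1=x_2$. A symmetric argument with the right endpoints forces $y_1=y_2$, so the jumps coincide.

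I do not anticipate a serious obstacle; the only thing to be slightly careful about is that the hypothesis $(a,b)_L\neq\emptyset$ in the definition of density is needed precisely so that jumps of $L$ do not have to be witnessed by elements of $D$, which is what allows separability and the existence of many jumps to coexist. Both parts are short and use nothing beyond the standard fact that $\mathbb Q$ is order-dense and countable in $\mathbb R$.
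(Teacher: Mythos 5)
Your proof is correct and follows essentially the same route as the paper's: picking one witness of $L$ from each rational-endpoint interval (the paper uses balls $(q-1/n,q+1/n)$ around rationals, an equivalent countable base) and injecting the set of jumps into $\mathbb Q$ via one rational per jump. Your only real addition is that you spell out the disjointness of distinct jumps, which the paper asserts in passing as ``$y_0\leq x_1$ or $y_1\leq x_0$''.
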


\begin{proof}
Let $x_0<y_0$ and $x_1<y_1$ be two different jumps in $L$.
Then there are $q_0,q_1\in\mathbb Q$ such that for all $i\in 2$, 
$x_i<q_i<y_i$.
Since the two jumps are different, $y_0\leq x_1$ or $y_1\leq x_0$.
In either case, $q_0\not=q_1$.
It follows that there are not more jumps than rationals, i.e., there are only countably many
jumps.

To see that $L$ is separable, choose a countable set $D\subseteq L$ such that
for all $q\in\mathbb Q$ and all $n>0$ the following holds:
if $L\cap(q-1/n,q+1/n)\not=\emptyset$, then $D\cap(q-1/n,q+1/n)\not=\emptyset$.
It is easily checked that $D$ is dense in $L$.
\end{proof}

\begin{example}
Consider the set $\mathbb R\times 2$ ordered lexicographically.
Then $\mathbb Q\times 2$ is dense in $\mathbb R\times 2$ 
and hence $\mathbb R\times 2$ is separable.
But $\mathbb R\times 2$ has uncountably many jumps and therefore
does not embed into $\mathbb R$.
\end{example}

\begin{thm}\label{universal}
If $L$ is any separable linear order, then there is an order embedding
 $e:L\to\mathbb R\times 2$.
\end{thm}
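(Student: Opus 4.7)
The idea is to construct a monotone ``rank function'' $\phi\colon L\to\R$ that is at most two-to-one, and then to lift it to an embedding into $\R\times 2$ by using the second coordinate to separate the (at most pairwise) ties.

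Enumerate a countable dense subset of $L$ as $D=\{d_n:n<\om\}$ and set
\[
\phi(x)=\sum_{n\,:\,d_n<x}2^{-n}.
\]
This is a bounded, non-decreasing map into $[0,2]$. The essential observation is a near-injectivity statement: if $x<y$ in $L$ and $\phi(x)=\phi(y)$, then the index sets $\{n:d_n<x\}$ and $\{n:d_n<y\}$ coincide, which means no element of $D$ lies in the half-open interval $[x,y)$; this immediately forces $x\notin D$ (otherwise $x$ itself is a witness), and then density of $D$ in $L$ forces $(x,y)=\emptyset$, i.e., $\{x,y\}$ is a jump.

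A short pigeonhole step rules out three-way collisions: if $x<y<z$ all had the same $\phi$-value, both $(x,y)$ and $(y,z)$ would be jumps, so $(x,z)=\{y\}$ is non-empty and density would force $y\in D$; but the previous paragraph, applied to the pair $(y,z)$, says $y\notin D$, a contradiction. Hence every $\phi$-value has at most two preimages, and whenever it has two, those preimages form a jump.

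Now define $e\colon L\to\R\times 2$ by $e(x)=(\phi(x),0)$ if some $y>x$ in $L$ satisfies $\phi(y)=\phi(x)$, and $e(x)=(\phi(x),1)$ otherwise. For $x<y$ in $L$: if $\phi(x)<\phi(y)$, lexicographic comparison is immediate; if $\phi(x)=\phi(y)$, the ``at most two preimages'' observation applied first to $x$ and then to $y$ forces $e(x)=(\phi(x),0)$ and $e(y)=(\phi(y),1)$, giving $e(x)<e(y)$. Injectivity falls out of either coordinate. The main work of the argument is the density-driven proof that distinct elements with the same $\phi$-value must form a jump; everything afterwards is bookkeeping.
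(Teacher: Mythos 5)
Your proof is correct, and it takes a genuinely different route from the paper's, though both share the same overall architecture (a monotone real-valued first coordinate that collides only at jumps, plus a $0/1$ second coordinate to break ties). The paper constructs the first coordinate by appealing to the saturation of $\mathbb{Q}$ to fix an order embedding $i\colon D\to\mathbb{Q}$ and then setting $e_1(x)=\sup\{i(d):d\in D,\ d\leq x\}$, breaking ties \emph{intrinsically}: $x$ gets second coordinate $0$ or $1$ according to whether or not $x=\sup\{d\in D: d\leq x\}$ holds in $L$. You instead build the first coordinate directly as the counting series $\phi(x)=\sum_{n:\,d_n<x}2^{-n}$, which buys you two things: no appeal to Cantor's theorem that every countable order embeds into $\mathbb{Q}$, and automatic well-definedness and boundedness --- the paper's $e_1$ tacitly needs $D$ to contain the endpoints (and really $i$ to map into a bounded copy of $\mathbb{Q}$) for the supremum to exist in $\mathbb{R}$, issues your $\phi\in[0,2]$ sidesteps entirely. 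The price of your \emph{extrinsic} tie-breaking rule (``is there a tie above me?'') is that you must rule out three-way collisions, and your pigeonhole step does this correctly; the paper's intrinsic rule makes that lemma unnecessary, since of two tied points at most one can equal the supremum of its $D$-predecessors. A side benefit of your version is that the key structural fact --- collisions occur exactly at jumps whose smaller partner lies outside $D$ --- is proved explicitly (correctly using the paper's interval-based notion of density), whereas the paper only remarks on it in passing.
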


\begin{proof}
Let $D$ be a countable dense subset of $L$.
We may assume that $D$ contains the first and last element of $L$ provided they exist.
By the saturation of $\mathbb Q$, there is an order embedding $i:D\to\mathbb Q$.
For each $x\in L$ let $$e_1(x)=\sup\{i(d):d\in D\wedge d\leq x\}.$$
Now $e_1:L\to\mathbb R$ preserves $\leq$, but we have $e_1(x)=e_1(y)$ if
$$x>y=\sup\{d\in D:d\leq x\}.$$
Note that this can only happen if $x$ is the successor of $y$ in $L$ and $x\not\in D$.

To correct this failure of injectivity, 
we embed into $\mathbb R\times 2$ rather than $\mathbb R$.
For $x\in L$ let
$$e(x)=\begin{cases}(e_1(x),0)\mbox{, if $x=\sup\{d\in D:d\leq x\}$ and}\\
(e_1(x),1)\mbox{, if $x>\sup\{d\in D:d\leq x\}.$}
\end{cases}$$
\end{proof}

The proof of Theorem \ref{universal} suggests the following notion:

\begin{defn}
Let $L$ be a linear order.
A set $D\subseteq L$ is {\em left dense} if for all 
$x\in L$, $x=\sup\{d\in D:d\leq x\}$.
We define {\em right dense} analogously, using $\inf$ instead of $\sup$.

$L$ is {\em left} ({\em right}) {\em separable} if $L$ has a countable
left (right) dense subset.
\end{defn}

\begin{thm}
For every linear order $L$ the following are equivalent:
\begin{enumerate}
\item $L$ is left separable.
\item $L$ is right separable.
\item $L$ is separable and has only countably many jumps.
\item $L$ order embeds into $\mathbb R$.
\end{enumerate}
\end{thm}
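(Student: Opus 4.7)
The plan is to argue by the cycle (4) $\Rightarrow$ (3) $\Rightarrow$ (1) $\Rightarrow$ (3) $\Rightarrow$ (4), together with the symmetric loop through (2). The implication (4) $\Rightarrow$ (3) is already Lemma \ref{jumps}, while (3) $\Leftrightarrow$ (1) and (3) $\Leftrightarrow$ (2) are essentially bookkeeping. The main content is (3) $\Rightarrow$ (4), which amounts to sharpening Theorem \ref{universal} so that the target is $\mathbb R$ itself rather than $\mathbb R\times 2$.

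For (1) $\Rightarrow$ (3), suppose $D$ is countable and left dense. First I would verify that $D$ is dense in the usual sense: given $a<b$ with $(a,b)\neq\emptyset$, pick $c\in(a,b)$ and use $c=\sup\{d\in D:d\leq c\}$ to extract some $d\in D$ with $a<d\leq c<b$. Second, any upper endpoint $y$ of a jump $(x,y)$ must lie in $D$, since otherwise $\sup\{d\in D:d\leq y\}=\sup\{d\in D:d\leq x\}\leq x<y$, contradicting left density. Hence there are at most countably many jumps. The argument for (2) $\Rightarrow$ (3) is symmetric, forcing lower endpoints of jumps into any countable right dense set.

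For (3) $\Rightarrow$ (1), enlarge a countable dense $D$ to $D':=D\cup J\cup\{\min L\}$, where $J$ is the countable set of upper endpoints of jumps and the term $\{\min L\}$ is included only if $L$ has a minimum. To verify that $D'$ is left dense, fix $x\in L$; for each $y<x$ I would produce $d\in D'$ with $y<d\leq x$, using density of $D$ when $(y,x)\neq\emptyset$ and taking $d=x\in J$ when $(y,x)=\emptyset$. This forces $\sup\{d\in D':d\leq x\}\geq x$, and the reverse inequality is trivial; the edge case $x=\min L$ is handled by $x\in D'$. The implication (3) $\Rightarrow$ (2) is analogous.

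The main step is (3) $\Rightarrow$ (4). Using (3), enlarge $D$ to a countable dense set still containing both endpoints of every jump, together with $\min L$ and $\max L$ if they exist. Choose an order embedding $i:D\to\mathbb Q\cap(0,1)$ via the saturation of $\mathbb Q$ and set $e(x):=\sup\{i(d):d\in D,\ d\leq x\}\in[0,1]$. Order preservation is immediate, so the only content is strict monotonicity. If $x<y$ and $(x,y)=\emptyset$, then $x,y\in D$ and $e(x)=i(x)<i(y)\leq e(y)$. If $(x,y)\neq\emptyset$, I would first pick $d\in D\cap(x,y)$ by density and then a second $d'\in D$ with $d<d'\leq y$, using density again if $(d,y)\neq\emptyset$ and taking $d'=y\in D$ if $(d,y)$ is a jump. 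The sandwich $e(x)\leq i(d)<i(d')\leq e(y)$ then yields injectivity. The crucial observation, and what the $\mathbb R\times 2$ workaround in Theorem \ref{universal} was sidestepping, is that once both endpoints of each jump lie in $D$, one can always insert two distinct $D$-points into $(x,y]$ and thereby separate the two suprema.
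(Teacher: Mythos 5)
Your proof is correct, but it is organized differently from the paper's, and the difference is substantive. The paper proves (1) $\Rightarrow$ (4) and, symmetrically, (2) $\Rightarrow$ (4) directly, by asserting that when $D$ is left dense the map $e_1(x)=\sup\{i(d):d\in D,\ d\leq x\}$ from the proof of Theorem \ref{universal} is already injective; it then gets (4) $\Rightarrow$ (3) from Lemma \ref{jumps} and closes the cycle with (3) $\Rightarrow$ (1),(2) by adding jump endpoints to a countable dense set, essentially as you do. You instead route everything through (3): you prove (1) $\Rightarrow$ (3) and (2) $\Rightarrow$ (3) by the purely combinatorial observation that a countable left dense set is dense and must contain the upper endpoint of every jump, and you concentrate the analytic construction in the single implication (3) $\Rightarrow$ (4). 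The paper's decomposition is shorter because it reuses Theorem \ref{universal} wholesale; yours is self-contained and, notably, more robust. Indeed, the paper's claim that left density of $D$ makes $e_1$ injective is not literally true for an \emph{arbitrary} embedding $i:D\to\mathbb Q$: take $L=\omega+2$ and $D=\omega\cup\{\omega+1\}$, which is left dense, and let $i(n)=1-2^{-n}$, $i(\omega+1)=1$; then $e_1(\omega)=e_1(\omega+1)=1$ (and even the corrected map $e$ of Theorem \ref{universal} sends both points to $(1,0)$, since $\omega=\sup\{d\in D:d\leq\omega\}$ and $\omega+1\in D$). The failure occurs exactly when a point $y\notin D$ is a limit of $D$ from the left, its successor $x$ lies in $D$, and the $i$-images of the elements of $D$ below $y$ accumulate at $i(x)$; repairing it requires either a more careful choice of $i$ or your device. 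Your two-point interpolation---forcing both endpoints of each jump into $D$ and then producing $d<d'$ in $D\cap(x,y]$ so that $e(x)\leq i(d)<i(d')\leq e(y)$---rules this collision out for \emph{every} choice of $i$, so your version of (3) $\Rightarrow$ (4) quietly closes a gap that the paper's (1) $\Rightarrow$ (4), as written, glosses over.
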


\begin{proof}
If $L$ is left separable, then the map $e_1$ in the proof of Theorem \ref{universal}
embeds $L$ into $\mathbb R$.
This shows the implication from (1) to (4).
The implication from (2) to (4) now follows symmetrically.
If $L$ embeds into $\mathbb R$, then
$L$ is separable and has only countably many jumps by Lemma \ref{jumps}.
Hence (4) implies (3)

If $L$ is separable and has only countably many jumps,
let $D\subseteq L$ be countable, dense, and such that for each jump $x<y$, 
$x,y\in D$.
It is easily checked that $D$ is both left and right dense.
Hence (3) implies both (1) and (2)
\end{proof}

Note that if $x<y$ is a jump of a linear order $L$ and there is an automorphism $\varphi$ of 
$L$ that maps $x$ to $y$, then $y$ and hence $x$ is isolated.
It follows that a separable homogeneous linear order either has no jumps and therefore embeds into
$\mathbb R$ or all its elements are isolated and hence the linear order is isomorphic to the integers 
$\mathbb Z$.
It follows that $\mathbb R$ is universal for homogeneous separable linear orders,
but no universal separable linear order is homogeneous.

Another way to analyze the situation is this:  
given a linear order $L$, we consider two subsets definable without parameters, 
namely the set $J_\ell(L)$ of left partners of a jump and the set $J_r(L)$ of right
partners of a jump.
Also, there is a binary relation that can be defined without parameters, namely
the relation $J(L)$ where $(x,y)\in J(L)$ if $x<y$ or $y<x$ is a jump.

Every automorphism of $L$ preserves $J_\ell(L)$, $J_r(L)$, and the relation $J(L)$.
Therefore it makes sense to define homogeneity as follows:

\begin{defn}
A linear order $L$ is {\em jump homogeneous} if for all finite sets $A,B\subseteq L$
every bijection $b:A\to B$ that preserves the relations $<$, 
$J_\ell(L)$, $J_r(L)$, and $J(L)$
extends to an automorphism of $L$.
\end{defn}

\begin{lemma}
The linear order $\mathbb R\times 2$ is jump homogeneous.
\end{lemma}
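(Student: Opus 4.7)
My plan is to reduce this to the evident fact that $(\mathbb R,<)$ is itself homogeneous in the sense that every finite order-preserving partial map extends to an order-isomorphism $\mathbb R\to\mathbb R$. To begin I would identify the invariants in $\mathbb R\times 2$: the only jumps are the pairs $((r,0),(r,1))$ for $r\in\mathbb R$, so $J_\ell(\mathbb R\times 2)=\mathbb R\times\{0\}$, $J_r(\mathbb R\times 2)=\mathbb R\times\{1\}$, and $(x,y)\in J(\mathbb R\times 2)$ iff $x$ and $y$ share a first coordinate and differ in the second.

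Next I would verify that every automorphism $\varphi$ of $\mathbb R\times 2$ has the form $\varphi(r,\epsilon)=(f(r),\epsilon)$ for some order-isomorphism $f$ of $\mathbb R$. Since automorphisms preserve $J_\ell$ and $J_r$ (these being definable from $<$), each fiber $\mathbb R\times\{\epsilon\}$ is invariant; and the condition that $\varphi$ send the jump $((r,0),(r,1))$ to a jump forces the two resulting functions $\mathbb R\to\mathbb R$ to coincide.

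Now, given a finite bijection $b:A\to B$ preserving $<$, $J_\ell$, $J_r$, and $J$, let $X=\{r\in\mathbb R:\exists\epsilon\,(r,\epsilon)\in A\}$ and define $Y$ analogously for $B$. I would define $f_0:X\to Y$ by $f_0(r)=s$ whenever $b(r,\epsilon)=(s,\epsilon)$. Preservation of $J_\ell$ and $J_r$ ensures that $b$ acts within each fiber, and preservation of $J$ ensures $f_0$ is well-defined: if both $(r,0)$ and $(r,1)$ lie in $A$, then $b(r,0)$ and $b(r,1)$ again form a jump and hence share a first coordinate. The map $f_0$ is order-preserving since $b$ is. I would then extend $f_0$ to an order-isomorphism $f:\mathbb R\to\mathbb R$ by piecewise linear interpolation on the finitely many intervals determined by $X$, and take $\varphi(r,\epsilon)=(f(r),\epsilon)$ as the desired automorphism extending $b$.

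I do not expect a serious obstacle: once the automorphism group of $\mathbb R\times 2$ has been identified with $\Aut(\mathbb R,<)$, the rest is the standard back-and-forth style extension for $\mathbb R$. The only point needing care is well-definedness of $f_0$, which is exactly what the $J$-preservation hypothesis supplies.
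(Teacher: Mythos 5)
Your proof is correct and is, in substance, the paper's own argument made concrete: the paper forms the quotient $(\mathbb R\times 2)/J(L)\cong\mathbb R$, extends the induced finite partial map $\overline f$ using homogeneity of $\mathbb R$, and lifts the resulting automorphism back fiber-preservingly; your map $f_0$ on first coordinates is exactly the induced map on that quotient, and your $\varphi(r,\epsilon)=(f(r),\epsilon)$ is exactly the lift. What your version adds is explicitness where the paper is telegraphic: you actually compute the invariants ($J_\ell=\mathbb R\times\{0\}$, $J_r=\mathbb R\times\{1\}$, and $J$ holding exactly between points with equal first coordinates), you verify that preservation of $J_\ell$ and $J_r$ forces $b$ to act fiber-wise (which is what makes the lift extend $b$ --- the paper compresses this into one sentence), and you replace the abstract appeal to homogeneity of $\mathbb R$ by piecewise linear interpolation. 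Your preliminary identification of $\Aut(\mathbb R\times 2)$ with $\Aut(\mathbb R,<)$ is true but not actually needed for the extension argument.

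One step is glossed in your writeup (and, to be fair, equally glossed in the paper's): you say $f_0$ is order-preserving ``since $b$ is,'' and that well-definedness is the only point needing care. In fact order-preservation of $b$ alone yields only weak monotonicity of $f_0$. If $r<r'$ with $(r,0),(r',1)\in A$, then $b(r,0)=(s,0)<b(r',1)=(s',1)$ is compatible with $s=s'$; in that case $f_0$ fails to be injective and the interpolation step breaks. Indeed, $A=\{(0,0),(1,1)\}$, $B=\{(0,0),(0,1)\}$ with the order-preserving, fiber-preserving bijection $b$ satisfies the forward direction of $J$-preservation vacuously, yet extends to no automorphism. What rules this out is $J$-preservation read as a biconditional: if $b(r,0)$ and $b(r',1)$ formed a jump, then $(r,0)$ and $(r',1)$ would have to form a jump, forcing $r=r'$. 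So $J$-preservation is used twice --- forward for well-definedness of $f_0$, backward for strict monotonicity --- and this is a one-line fix under the standard reading of ``preserves'' for partial isomorphisms. The same dichotomy is hidden in the paper's unproved assertion that $\overline f$ is an order-preserving bijection of the quotients to which homogeneity of $\mathbb R$ applies, so your proof, once this sentence is added, is if anything more complete than the original.
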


\begin{proof}
Let $A,B\subseteq\mathbb R\time 2$ be finite sets and let $f:A\to B$ a bijection that
preserves $<$, $J_\ell(L)$, $J_r(L)$, and $J(L)$.
Note that $J(L)$ is an equivalence relation.
Since $f$ preserves $J(L)$, 
it induces a a bijection $\overline f:A/J(L)\to B/J(L)$.
Since $\mathbb R$ is homogeneous and $(\mathbb R\times 2)/J(L)\cong\mathbb R$, $\overline f$ extends to an automorphism $\overline g$ of
$(\mathbb R\times 2)/J(L)$.
Now $\overline g$ lifts to an automorphism $g$ of $\mathbb R\times 2$.
Since $f$ preserves $J_\ell(L)$ and $J_r(L)$, $g$ extends $f$.
\end{proof}

We now discuss the existence of a universal separable linear order of size 
$\aleph_1<2^{\aleph_0}$.

\begin{defn}
A set $S\subseteq\mathbb R$ is $\aleph_1$-dense if
for all $x,y\in\mathbb R$ with $x<y$, $S\cap(x,y)$ is of size $\aleph_1$.
\end{defn}

Baumgartner showed the following \cite{Baumgartner}:

\begin{thm}
It is consistent with $2^{\aleph_0}=\aleph_2$ that any two $\aleph_1$-dense set of reals 
are order isomorphic.
\end{thm}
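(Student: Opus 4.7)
The plan is a finite-support iteration of ccc forcings of length $\omega_2$ over a ground model of CH. At each stage $\alpha<\omega_2$, select by bookkeeping a pair $A_\alpha,B_\alpha$ of $\aleph_1$-dense sets of reals in the intermediate model $V[G_\alpha]$ and force with the partial order $P(A_\alpha,B_\alpha)$ whose conditions are finite order-preserving partial injections from $A_\alpha$ to $B_\alpha$, ordered by reverse inclusion. The union of the generic filter is an order isomorphism of $A_\alpha$ onto $B_\alpha$. Because finite-support iterations of ccc forcings are ccc, $\aleph_1$ is preserved throughout and $\aleph_2$ is not collapsed; combined with CH in the ground model and the $\aleph_2$-cc of the iteration, the final extension satisfies $2^{\aleph_0}=\aleph_2$. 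A standard count shows each intermediate model has at most $\aleph_2$ pairs of $\aleph_1$-dense sets (up to nice names), so the bookkeeping can be arranged to catch every such pair that ever appears.

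The decisive technical step is the ccc of the basic forcing $P(A,B)$. Given an uncountable family $\{p_\xi:\xi<\omega_1\}\subseteq P(A,B)$, first thin so that all conditions have a common size $n$, their domains form a $\Delta$-system with root $r\subseteq A$, their ranges form a $\Delta$-system with root $s\subseteq B$, and each $p_\xi$ agrees on $r$, so the root map $r\to s$ is fixed. Enumerating the new leaves of each $p_\xi$ in increasing order of their $A$-coordinates, refine further so that the order type of $\dom(p_\xi)$ relative to $r$ is the same for every $\xi$, likewise for the ranges relative to $s$, and so that the interleaving pattern of corresponding leaves with $r$ and with $s$ is uniform across $\xi$. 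After this reduction, compatibility of $p_\xi$ and $p_\eta$ amounts to the new leaves being simultaneously interleaveable while preserving order on both sides. Using $\aleph_1$-density, a final pigeonhole inside the open intervals cut out by $r$, $s$, and the already-chosen leaves produces two indices $\xi\neq\eta$ for which the new leaves of $p_\xi$ and $p_\eta$ lie in disjoint matching subintervals; then $p_\xi\cup p_\eta$ is order-preserving and is a common extension.

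The main obstacle is exactly this last combinatorial step: turning the informal principle that \emph{$\aleph_1$-dense sets look the same locally} into a precise pigeonhole producing compatible conditions. Mere density would not suffice, since after localizing to small open intervals determined by the root one needs uncountably many surviving conditions in order to apply the final thinning; $\aleph_1$-density is precisely the hypothesis that guarantees this. Once the ccc of $P(A,B)$ is in hand, the rest of the argument is routine iterated-forcing hygiene: ccc and cardinal preservation along the iteration, the cardinal arithmetic yielding $2^{\aleph_0}=\aleph_2$ in the extension, and bookkeeping that exhausts all $\aleph_1$-dense pairs appearing at any stage so that each is made isomorphic at some later stage. In the resulting model, any two $\aleph_1$-dense sets of reals are order isomorphic.
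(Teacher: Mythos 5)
Your scaffolding --- a finite-support ccc iteration of length $\omega_2$ over a model of CH, with bookkeeping catching all pairs of $\aleph_1$-dense sets, plus the cardinal arithmetic giving $2^{\aleph_0}=\aleph_2$ --- is the standard one, and that part is fine. But the step you call decisive, the claim that $P(A,B)$ is ccc for \textit{every} pair of $\aleph_1$-dense sets, is simply false, and the $\Delta$-system/pigeonhole argument you sketch cannot be repaired. Concretely: let $A$ be any $\aleph_1$-dense set of reals and let $B=-A=\{-a:a\in A\}$, which is again $\aleph_1$-dense, and a pair your bookkeeping must eventually handle. The singleton conditions $p_a=\{(a,-a)\}$ for $a\in A$ are pairwise incompatible: if $a<a'$, then a common extension would have to be order preserving, forcing $-a<-a'$, which fails. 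This is an antichain of size $\aleph_1$ in $P(A,B)$, so $P(A,B)$ is not ccc, and once a single iterand fails ccc the finite-support iteration no longer preserves $\omega_1$ and the whole argument collapses. The precise point where your sketch breaks is the final pigeonhole: $\aleph_1$-density lets you \textit{choose new points} of $A$ and $B$ realizing any prescribed interleaving pattern, but it says nothing about the leaves of the conditions you were handed, which are fixed. Your argument tacitly replaces ``there exist points in matching subintervals'' (true by density) with ``among the given $\omega_1$ conditions, two have their $A$-leaves and $B$-leaves interleaved consistently on both sides'' --- a Ramsey-type assertion that the anti-diagonal family refutes for every pair of indices at once.

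The gap is essential rather than cosmetic. If your ccc lemma were a theorem of ZFC, then $\MA_{\aleph_1}$ applied to $P(A,B)$ together with the dense sets $D_a=\{p:a\in\dom(p)\}$ and $R_b=\{p:b\in\operatorname{ran}(p)\}$ would immediately yield that any two $\aleph_1$-dense sets are isomorphic; but Abraham and Shelah proved that $\MA_{\aleph_1}$ is consistent with the existence of two non-isomorphic $\aleph_1$-dense sets, so no ZFC proof of ccc for arbitrary pairs can exist. This is exactly why the theorem --- which the note above quotes from Baumgartner's paper without reproducing a proof, so your proposal must be measured against Baumgartner's actual argument --- is genuinely hard. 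Baumgartner's one-step lemma asserts that, assuming CH (which does hold at each intermediate stage of the length-$\omega_2$ iteration, by counting nice names), any two $\aleph_1$-dense sets can be made isomorphic by \textit{some} ccc forcing; the poset is not the naive $P(A,B)$ but a delicately constrained family of finite partial isomorphisms built using a CH-enumeration, and establishing its ccc is the real content of his paper. (The modern alternative derives the statement from PFA, with properness replacing ccc.) In short: the iteration hygiene you describe is routine, as you say, but the one-step ccc forcing is the heart of the theorem, and it is exactly what your proposal lacks.
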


\begin{corollary}
It is consistent that there is a universal separable linear order of size 
$\aleph_1<2^{\aleph_0}$.
\end{corollary}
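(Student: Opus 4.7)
The plan is to work in Baumgartner's model, where $2^{\aleph_0}=\aleph_2$ and any two $\aleph_1$-dense sets of reals are order isomorphic. Fix any $\aleph_1$-dense set $S\subseteq\mathbb R$; I claim that $S\times 2$, ordered lexicographically, serves as a universal separable linear order of size $\aleph_1$. That $|S\times 2|=\aleph_1<2^{\aleph_0}$ is clear, and separability of $S\times 2$ follows from applying Lemma \ref{jumps} to $S$ and doubling a countable dense subset.

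To show $S\times 2$ is universal, I would take an arbitrary separable linear order $L$ with $|L|\leq\aleph_1$. By Theorem \ref{universal} there is an order embedding $e\colon L\to\mathbb R\times 2$; let $T\subseteq\mathbb R$ denote the projection of $e(L)$ onto the first coordinate, so $|T|\leq\aleph_1$. Since there are only countably many open intervals of $\mathbb R$ with rational endpoints, I enlarge $T$ to a set $T'\subseteq\mathbb R$ by adding $\aleph_1$ new reals in each such interval; this produces $|T'|=\aleph_1$, and by the density of the rationals in $\mathbb R$, $T'$ is $\aleph_1$-dense.

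By Baumgartner's theorem there is an order isomorphism $\varphi\colon T'\to S$, and composing $e$ with the map $(x,i)\mapsto(\varphi(x),i)$ yields an order embedding of $L$ into $S\times 2$: indeed $\varphi$ preserves both strict inequality and equality of first coordinates, which is precisely the information the lexicographic order on $\mathbb R\times 2$ uses. The only nontrivial input is Baumgartner's theorem, which I take as a black box; the rest is bookkeeping once Theorem \ref{universal} has reduced the problem to embedding an arbitrary subset of $\mathbb R$ of size $\aleph_1$ into a fixed $\aleph_1$-dense one. The main obstacle I anticipate is ensuring the enlargement $T'$ really has size exactly $\aleph_1$ while remaining $\aleph_1$-dense, but as observed above, this is handled by inserting $\aleph_1$ many points in each of the countably many rational intervals.
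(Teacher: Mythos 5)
Your proposal is correct and follows essentially the same route as the paper: reduce via Theorem \ref{universal} to a subset of $\mathbb R\times 2$, enlarge the first-coordinate projection to an $\aleph_1$-dense set, and invoke Baumgartner's theorem to transfer into a fixed $S\times 2$. You merely spell out the enlargement step (which the paper compresses into ``by enlarging $S$ if necessary'') and fix one witness $S$ rather than observing that all such $S\times 2$ are isomorphic, so the two arguments coincide in substance.
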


\begin{proof}
By Baumgartner's result, we can assume that any two $\aleph_1$-dense sets of reals
are isomorphic and $\aleph_1<2^{\aleph_0}$.
Let $L$ be a separable linear order of size $\aleph_1$.
By Theorem \ref{universal}, 
$L$ is isomorphic to a subset of $\mathbb R\times 2$.
Let $S\subseteq\mathbb R$ be of size $\aleph_1$ such that $L$ embeds into $S\times 2$.
By enlarging $S$ if necessary, we may assume that $S$ is $\aleph_1$-dense.

It follows that every separable linear order of size $\aleph_1$ embeds into an order
of the form $S\times 2$ where $S$ is an $\aleph_1$-dense subset of $\mathbb R$.
But by our assumption, any two $\aleph_1$-dense subsets of $\mathbb R$ and therefore also any two linear orders of the form $S\times 2$ with $S\subseteq\mathbb R$ $\aleph_1$-dense are isomorphic.
It follows that every linear order of the form $S\times 2$ with $S$ $\aleph_1$-dense
is universal for separable linear orders of size $\aleph_1$.
\end{proof}

\end{document}